\PassOptionsToPackage{quiet}{fontspec}
\documentclass[leqno]{article}

\usepackage{amsmath,amsthm,amsfonts,color,upgreek}
\usepackage{amssymb}
\usepackage{graphicx}
\usepackage{setspace}
\usepackage{mathrsfs}
\usepackage{graphicx}
\usepackage[shortlabels]{enumitem}
\usepackage{authblk}
\usepackage{verbatim}
\usepackage{comment}

\newtheorem{theorem}{Theorem}[section]
\newtheorem{lemma}[theorem]{Lemma}
\newtheorem{corollary}[theorem]{Corollary}
\newtheorem{remark}[theorem]{Remark}

\usepackage{chngcntr}
\numberwithin{equation}{section}

\usepackage{calc}
\usepackage{tikz}
\usepackage{setspace}
\usepackage{geometry}
\usepackage{pgffor}%可以使用foreach的包
\usepackage{ifthen}%可以使用ifthenelse的包，还能使用whiledo
\usetikzlibrary{arrows.meta}%画箭头用的包
\usetikzlibrary{decorations.markings}
\tikzstyle{vertex}=[circle, draw, inner sep=2pt, minimum size=6pt]
\tikzstyle{filledvertex}=[circle, draw, fill, inner sep=2pt, minimum size=6pt]

\newcommand{\F}{\mathcal{F}}

\title{Typical intersecting families are trivial
\thanks{School of Mathematics and Statistics, 
Xi'an--Budapest Joint Research Center for Combinatorics,
Northwestern Polytechnical University,
Xi'an 710129, China; Research and Development Institute of Northwestern Polytechnical University in Shenzhen,
Shenzhen 518057, China.} }
\author{Yandong Bai
\thanks{E-mail: {\tt bai@nwpu.edu.cn}.}
\quad Haoyun Gu
\thanks{E-mail: {\tt haoyungu@mail.nwpu.edu.cn}.}
\quad Wenston J.T. Zang
\thanks{E-mail: {\tt zang@nwpu.edu.cn}.}
}
\date{}

\begin{document}

\maketitle

\begin{abstract}
We study the counting problem for non-uniform intersecting families in extremal set theory. Let $J(n,k)$ denote the number of intersecting families $\mathcal{F}\subset 2^{[n]}$ such that every member of $\mathcal{F}$ has size at most $k$. Extending recent counting results for uniform intersecting families, we prove that for $n\ge 2k+2+2\sqrt{k \log k}$ and $k \rightarrow +\infty$, 
\[
    J(n,k)=(n+o(1))2^{\sum_{i=1}^{k}\binom{n-1}{i-1}}.
\]
This result reveals that typical non-uniform intersecting families of bounded size are trivial, i.e., almost all such families share a common fixed element. 
%The proof combines known asymptotic enumeration of uniform intersecting families with cross‑intersecting bounds, and addresses the additional complexity arising from heterogeneous set sizes through a structural decomposition based on the largest sets.
\end{abstract}

{\small \textbf{Keywords:} Extremal set theory; Erd\H{o}s-Ko-Rado theorem; intersecting families; trivial intersecting families.}

{\small \textbf{MSC:} 05D05.}

\section{Introduction}

Let $[n] = \{1,\ldots,n\}$ be the standard $n$-element set and $2^{[n]}$ its power set. A family $\mathcal{F} \subset 2^{[n]}$ is called \emph{intersecting} if $F \cap F' \neq \emptyset$ for all $F, F' \in \mathcal{F}$; and $\F$ is called \textit{trivial} if all sets in $\F$ have non-empty intersection. For an integer $k \ge 1$ we use $\binom{[n]}{k}$ and $\binom{[n]}{\le k}$ to denote the families of all subsets of size $k$ and all subsets of size at most $k$ of $[n]$, respectively. 

The seminal result of Erd\H{o}s, Ko and Rado \cite{EKR61} states that if $\mathcal{F} \subset \binom{[n]}{k}$ is intersecting and $n \ge 2k\ge 4$, then 
\[
    \#\mathcal{F} \le \binom{n-1}{k-1}.
\] 
The trivial intersecting family that consists of all $k$-sets containing a fixed element, shows that this inequality is best possible. For non-trivial intersecting families (those with empty total intersection), Hilton and Milner \cite{HM67} determined the maximum size: for $n > 2k \ge 4$, if $\mathcal{F} \subset \binom{[n]}{k}$ is intersecting and non-trivial, then
\[
\#\mathcal{F} \le \binom{n-1}{k-1} - \binom{n-k-1}{k-1} + 1,
\]
with equality characterized by the so-called Hilton–Milner families.

Call a family $\F\subset \binom{[n]}{k}$ a \textit{$k$-uniform} family.
Let $I(n,k)$ denote the number of $k$-uniform intersecting families in $\binom{[n]}{k}$ and $I(n,k,\ge 1)$ the number of non-trivial such families. 
Improving a result of Balogh, Das, Delcourt, Liu and Sharifzadeh \cite{B+15JCTA}, Frankl and Kupavskii \cite{FK18} and independently Balogh, Das, Liu, Sharifzadeh and Tran \cite{B+19} showed that if $n \ge 2k + 2 + c\sqrt{k\ln k}$ for some positive constant $c$, then typical uniform intersecting families are trivial. 

\begin{theorem}[Balogh et al. \cite{B+19}, Frankl and Kupavskii \cite{FK18}]
For $n \ge 2k + 2 + c\sqrt{k\log k}$ and $k \to \infty$, 
\[
I(n,k) = (n + o(1))2^{\binom{n-1}{k-1}} \quad \text{and} \quad I(n,k,\ge 1) = o\left(2^{\binom{n-1}{k-1}}\right),
\]
where $c = 2$ in \cite{FK18} and $c$ was a large constant in \cite{B+19}.
\end{theorem}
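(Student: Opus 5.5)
Write $N=\binom{n-1}{k-1}$. First I handle the trivial families. For each $x\in[n]$, every subfamily of the star $\mathcal{S}_x=\{F\in\binom{[n]}{k}: x\in F\}$ is intersecting, which gives at least $2^{N}$ families for each $x$. Two stars $\mathcal{S}_x,\mathcal{S}_y$ share only $\binom{n-2}{k-2}$ sets. So by inclusion--exclusion the number of trivial intersecting families is
\[
n2^{N}-O\bigl(n^2 2^{\binom{n-2}{k-2}}\bigr)=(n+o(1))2^N,
\]
since $\binom{n-2}{k-2}=\frac{k-1}{n-1}N\le N/2$. The lower bound follows. For the upper bound it suffices to show $I(n,k,\ge1)=o(2^N)$.

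For non-trivial families I use a container/stability count in the style of Frankl--Kupavskii. Let $\mathcal{F}$ be non-trivial and let $x$ be an element of maximum degree. Split $\mathcal{F}=\mathcal{F}(x)\cup\mathcal{F}(\bar x)$, where $\mathcal{F}(\bar x)\neq\emptyset$ consists of the sets avoiding $x$. Every $A\in\mathcal{F}(x)$ must meet every $B\in\mathcal{F}(\bar x)$. Fix the family $\mathcal{B}=\mathcal{F}(\bar x)$ with $\#\mathcal{B}=s\ge1$. Then $\mathcal{F}(x)$ is a subset of the sets in $\mathcal{S}_x$ that meet all of $\mathcal{B}$. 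That is, $\mathcal{F}(x)$ avoids the union over $B\in\mathcal{B}$ of $\{A\ni x: A\cap B=\emptyset\}$. Each such forbidden family has size $\binom{n-k-1}{k-1}$. So the count is at most
\[
\sum_{\mathcal{B}} 2^{\,N-|\bigcup_{B\in\mathcal{B}}\mathcal{D}(B)|},\qquad \mathcal{D}(B)=\{A\ni x: A\cap B=\emptyset\}.
\]

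The key estimate is a lower bound on $|\bigcup_B\mathcal{D}(B)|$ in terms of $s$, via a Kruskal--Katona / cross-intersection type bound. I expect this to give something like $|\bigcup\mathcal{D}(B)|\ge \binom{n-k-1}{k-1}$ for small $s$, growing with $s$. For large $s$ I instead use the Hilton--Milner-type stability bound $\#\mathcal{F}\le N-\binom{n-k-1}{k-1}+1$ together with the estimate that $\mathcal{B}$ is itself (cross-)intersecting. Then summing over $\mathcal{B}$ (at most $\binom{\binom{n}{k}}{s}$ choices) against $2^{-\binom{n-k-1}{k-1}}$ gives $o(2^N)$ for each $x$, and the factor $n$ is absorbed.

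The main obstacle is the regime where $s$ is moderately large. There the number of choices $\binom{\binom{n-1}{k}}{s}\approx 2^{s\log\binom{n}{k}}$ must be beaten by the loss $|\bigcup\mathcal{D}(B)|$. I would handle it by splitting on the size of $\mathcal{B}$. For $s\le \binom{n-k-1}{k-1}/(k\log n)$, the crude bound $2^{-\binom{n-k-1}{k-1}}$ already dominates. For larger $s$, I would apply the Frankl--Kupavskii cross-intersecting inequality $|\mathcal{A}|+|\mathcal{B}|$-type bound together with the maximality of the degree of $x$. This is exactly where the condition $n\ge 2k+2+2\sqrt{k\log k}$ enters, through $\binom{n-k-1}{k-1}/\binom{n-1}{k-1}\ge k^{-c'}$ beating the $k\log n$ entropy factor.
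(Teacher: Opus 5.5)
The paper does not prove this statement; it is quoted from Frankl--Kupavskii and Balogh et al., so your proposal has to stand on its own. The lower bound via inclusion--exclusion over the stars is correct. The upper bound $I(n,k,\ge 1)=o(2^N)$, which carries all the difficulty, has a genuine gap. In the large-$s$ regime you defer, the maximum-degree condition is not a refinement but the crux. Your bound $\sum_{\mathcal{B}}2^{N-|\bigcup_{B\in\mathcal{B}}\mathcal{D}(B)|}$ never uses it: neither the range of $\mathcal{B}$ nor the factor $2^{N-|\bigcup\mathcal{D}(B)|}$ sees it. With $\mathcal{B}$ running over nonempty intersecting subfamilies of $\binom{[n]\setminus\{x\}}{k}$, the sum counts every intersecting family with a member avoiding $x$. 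In particular it counts every subfamily of a star $\mathcal{S}_y$, $y\neq x$, that is not contained in $\mathcal{S}_x$. So it is at least $(n-1-o(1))2^N$, and no lower bound on $|\bigcup\mathcal{D}(B)|$ in terms of $s$ alone can make it $o(2^N)$. The proposal names the degree condition but never says how it cuts down the pairs $(\mathcal{F}(\bar x),\mathcal{F}(x))$.

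Even for $\mathcal{B}$ that genuinely arise, the union bound over $\mathcal{B}$ fails. Counting $\mathcal{B}$ by $\binom{\binom{n-1}{k}}{s}$ and multiplying by $2^{N-u(s)}$, where $u(s)$ is the smallest possible union at size $s$, requires $u(s)\ge s\log_2\bigl(\binom{n-1}{k}/s\bigr)+\omega(1)$. This is false.
\begin{itemize}
\item Take $\mathcal{T}=\{F\in\binom{[n]}{k}:|F\cap\{x,y,z\}|\ge 2\}$. It is non-trivial and intersecting, and $x$ has maximum degree in it.
\item $\mathcal{B}=\mathcal{T}(\bar x)$ consists of the $s=\binom{n-3}{k-2}$ sets through $\{y,z\}$ avoiding $x$.
\item The union $\bigcup_{B\in\mathcal{B}}\mathcal{D}(B)=\{A\ni x: A\cap\{y,z\}=\emptyset\}$ has size $\binom{n-3}{k-1}=\frac{n-k-1}{k-1}\,s=(1+o(1))s$ when $n=2k+O(\sqrt{k\log k})$.
\item But $\log_2\binom{\binom{n-1}{k}}{s}\ge s\log_2\frac{(n-1)(n-2)}{k(k-1)}\ge 2s$.
\end{itemize}

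The tools you name cannot repair this. No size inequality for cross-intersecting pairs can help, since the example shows the union $(1+o(1))s$ is actually attained. The Hilton--Milner bound on $\#\mathcal{F}$ is useless for counting: near $n=2k$ we have $\binom{n}{k}=\frac{n}{k}N\approx 2N$, so there are $2^{(2-o(1))N}$ families of size at most $N-\binom{n-k-1}{k-1}+1$.

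What is missing is a far more economical enumeration of the possible $\mathcal{F}(\bar x)$, for instance a container or fingerprint argument under which subfamilies of $\mathcal{T}(\bar x)$ cost about $2^{s}$ rather than $\binom{\binom{n-1}{k}}{s}$. It must be combined with a quantitative use of the degree condition.

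Your account of where the hypothesis on $n$ enters is also wrong. For $n=2k+O(\sqrt{k\log k})$ one has $\binom{n-k-1}{k-1}/\binom{n-1}{k-1}=2^{-(2-o(1))k}$, not $\ge k^{-c'}$. A minor point: the small-$s$ cutoff must be a constant factor below $\binom{n-k-1}{k-1}/(k\log n)$, since at that value the exponent is $0$.
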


A slightly weaker result that covers the entire range of parameters was obtained via the graph container method in \cite{B+15JCTA}.

\begin{theorem}[Balogh et al. \cite{B+15JCTA}]
For $n \ge 2k + 1$, 
\[
I(n,k) = 2^{(1 + o(1))\binom{n-1}{k-1}}.
\]
\end{theorem}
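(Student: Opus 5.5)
The lower bound is immediate: every subfamily of the star $\{F\in\binom{[n]}{k}: 1\in F\}$ is intersecting, so $I(n,k)\ge 2^{\binom{n-1}{k-1}}$. For the upper bound I would use the graph container method on the Kneser graph $K=K(n,k)$. Its vertex set is $\binom{[n]}{k}$, and two $k$-sets are adjacent when they are disjoint. Intersecting families are exactly the independent sets of $K$. Write
\[
N=\binom nk,\qquad D=\binom{n-k}{k},\qquad \alpha=\binom{n-1}{k-1}.
\]
Here $N$ is the number of vertices, $D$ is the common degree, and $\alpha$ is the independence number given by the Erd\H{o}s--Ko--Rado theorem. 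The goal is to show that
\[
\log_2 \#\{\text{independent sets of } K\}\le (1+o(1))\alpha .
\]

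\textbf{Step 1 (supersaturation).} The smallest eigenvalue of $K$ is $\lambda=-\binom{n-k-1}{k-1}$, and Hoffman's bound $\alpha=N|\lambda|/(D+|\lambda|)$ is attained. The expander mixing inequality, applied with the least eigenvalue, gives for every $\mathcal A\subset\binom{[n]}{k}$
\[
e(\mathcal A)\ \ge\ \frac{D+|\lambda|}{2N}\,|\mathcal A|\bigl(|\mathcal A|-\alpha\bigr).
\]
Consequently, if $|\mathcal A|\ge(1+\varepsilon)\alpha$, then $\mathcal A$ contains a vertex of degree at least roughly $\varepsilon D$ inside $\mathcal A$. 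The same conclusion holds for every subset of $\mathcal A$ of size at least $(1+\varepsilon)\alpha$.

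\textbf{Step 2 (Kleitman--Winston containers).} Fix an independent set $\mathcal I$. Run the greedy algorithm: repeatedly take the vertex of maximum degree in the current set $\mathcal A$ (with ties broken by a fixed order). If that vertex lies in $\mathcal I$, add it to the fingerprint $S$ and delete its neighbourhood from $\mathcal A$. Otherwise delete just the vertex. Stop once $|\mathcal A|\le(1+\varepsilon)\alpha$. By Step 1, each fingerprint vertex removes at least about $\varepsilon D$ vertices, so
\[
|S|\le s:=\frac{N}{\varepsilon D}.
\]
The remaining set $\mathcal A$ is determined by $S$, and $\mathcal I\subset S\cup\mathcal A$. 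Summing over fingerprints gives
\[
I(n,k)\le \sum_{j\le s}\binom{N}{j}\,2^{(1+\varepsilon)\alpha}.
\]

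\textbf{Step 3 (the count).} I expect this step to be the main obstacle, because $K$ expands poorly when $n$ is close to $2k+1$; there $D=k+1$ only. Using the crude estimate $\log\binom Nj\le j\log N$ loses a constant factor of $\alpha$. Instead I would use $\log\binom Ns\le s\log(eN/s)=s\log(e\varepsilon D)$, together with
\[
\frac{N}{D}=\frac{\alpha\,n}{k D}.
\]
This yields
\[
\log\binom Ns\ \lesssim\ \frac{n}{k}\cdot\frac{\log(\varepsilon D)}{\varepsilon D}\,\alpha .
\]
The right-hand side is $o(\alpha)$ as long as $\varepsilon D\to\infty$, since $\log x/x\to0$; this holds because $D\ge k+1\to\infty$. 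Choosing $\varepsilon=\varepsilon(k)\to0$ slowly enough that still $\varepsilon D\to\infty$ (for instance $\varepsilon=D^{-1/2}$) gives the bound $2^{(1+o(1))\alpha}$.

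The remaining care is needed in two places. First, when $n/k$ is large the factor $n/k$ in the count should be absorbed by the much larger $D$; this needs a check. Second, the o(1) in the statement is as $k\to\infty$; if $k$ is fixed and $n\to\infty$ instead, $D$ is still large and the same argument applies. If the greedy degree bound of Step 1 turned out too weak near $n=2k+1$, I would fall back on an iterated container argument that passes through a Hilton--Milner stability step.
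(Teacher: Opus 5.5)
The paper gives no proof of this statement. It quotes it from Balogh et al., whose argument follows the graph-container route you chose, so your overall plan is right. The problem is Step~1, which overstates what the eigenvalue supersaturation gives. Your inequality says the average degree inside $\mathcal A$ is at least $\frac{D+|\lambda|}{N}(|\mathcal A|-\alpha)$. Tightness of Hoffman's bound means $\frac{(D+|\lambda|)\alpha}{N}=|\lambda|$. So at $|\mathcal A|\approx(1+\varepsilon)\alpha$ you are only guaranteed a vertex of degree $\varepsilon|\lambda|=\varepsilon\binom{n-k-1}{k-1}=\varepsilon\frac{k}{n-k}D$, not $\varepsilon D$. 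These agree only when $n=O(k)$, and the stronger claim is false in general. In $K(n,2)$, the star at $1$ together with $\varepsilon(n-1)$ pairs $\{2,j\}$ has maximum degree $n-3$, while $\varepsilon D\approx\varepsilon n^2/2$ is far larger once $\varepsilon n\to\infty$. Hence the factor $n/k$ in Step~3 is not absorbed by $D$. The correct estimate is $\log_2\binom{N}{s}\lesssim\frac{n}{k}\cdot\frac{\log(\varepsilon|\lambda|)}{\varepsilon|\lambda|}\,\alpha$.

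If $k\to\infty$, this is still $o(\alpha)$ uniformly in $n\ge 2k+1$. Take $\varepsilon=k^{-1/2}$, and use $|\lambda|=k$ at $n=2k+1$ and $|\lambda|\ge\binom{n-k-1}{2}$ for $n\ge 2k+2$, $k\ge 3$. So in that reading the error costs only a recomputation.

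Your claim that the same argument covers fixed $k$ with $n\to\infty$ does fail. For $k=2$ one has $|\lambda|=n-3$, so $s=N/(\varepsilon|\lambda|)\approx n/(2\varepsilon)$, and $\sum_{j\le s}\binom{N}{j}$ is not $2^{o(n)}$ for any $\varepsilon=o(1)$. The case $k=1$, where $D=n-1$ is large but $I(n,1)=n+1$, also shows that largeness of $D$ is not what matters.

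The missing ingredient is the standard Kleitman--Winston accounting, in which $n/k$ enters only logarithmically. The degree guarantee is proportional to the current excess. So each fingerprint vertex shrinks $|\mathcal A|-\alpha$ by a factor of at least $1-|\lambda|/\alpha$, giving $|S|\le\frac{\alpha}{|\lambda|}\ln\frac{n}{k\varepsilon}+1$. Combine this with $|\lambda|\ge n-k-1\ge\frac{n-1}{2}$ for $k\ge2$. This yields $|S|\log_2(eN/|S|)=o(\alpha)$ as $n\to\infty$, uniformly in $2\le k\le\frac{n-1}{2}$ (e.g.\ with $\varepsilon=1/\log n$), which covers both regimes. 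No Hilton--Milner fallback is needed.
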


In this paper, we consider intersecting families that are not necessarily uniform. 
Let $J(n,k)$ denote the number of intersecting families $\mathcal{F} \subset 2^{[n]}$ with $\#F \le k$ for all $F \in \mathcal{F}$.
Our main result extends the classical counting results for uniform intersecting families to the non-uniform setting.

\begin{theorem} \label{thm:main}
    For $n \ge 2k+2+ 2\sqrt{k \log k}$ and $k \rightarrow +\infty$, 
    \[
        J(n,k) = (n + o(1)) 2^{\sum_{i=1}^k \binom{n-1}{i-1}}.
    \]
\end{theorem}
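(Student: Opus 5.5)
Let $N=\sum_{i=1}^k\binom{n-1}{i-1}$, the number of sets of size at most $k$ containing a fixed element, and write $M=\binom{n-1}{k-1}$.

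\emph{Lower bound.} For each $x\in[n]$, every subfamily of the star $\{F\ni x:\#F\le k\}$ is intersecting, which gives $n2^N$ families. Two distinct stars share only subfamilies that are contained in the intersection of the two stars. So inclusion--exclusion loses at most $\binom n2 2^{\sum_{i\ge 2}\binom{n-2}{i-2}}$, and since $\sum_{i\le k}\binom{n-2}{i-1}\ge M/2$ for $n\ge 2k$, this loss is $o(2^N)$. Hence $J(n,k)\ge (n-o(1))2^N$.

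\emph{Upper bound.} It suffices to show that the number of non-trivial intersecting families with all sets of size at most $k$ is $o(2^N)$. The plan is to decompose by layers: write $\F=\F_1\cup\dots\cup\F_k$ with $\F_i=\F\cap\binom{[n]}{i}$.

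1. \textbf{Small layers are cheap.} The layers $\F_1,\dots,\F_{k-1}$ can be chosen in at most $2^{\sum_{i<k}\binom ni}$ ways. We have $\sum_{i<k}\binom ni\le \sum_{i<k}\binom{n-1}{i-1}+\sum_{i<k}\binom{n-1}{i}$, and $\binom{n-1}{k-1}/\binom{n-1}{k-2}=(n-k+1)/(k-1)\ge 1+\Omega(1)$. So the excess over $\sum_{i<k}\binom{n-1}{i-1}$ is at most about $\binom{n-1}{k-1}\cdot O(1)\cdot\frac{k-1}{n-k+1}$, which is comparable to $M$. This is too big to pay for outright, so the argument must exploit the interaction between layers.

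2. \textbf{Case on the top layer.} If $\F_k$ is non-trivial, then a Hilton--Milner type bound makes $\#\F_k$ at most $M-\binom{n-k-1}{k-1}+1$. More to the point, the uniform result (the theorem of Balogh et al.\ and Frankl--Kupavskii above, applied with $c=2$) says the number of non-trivial choices of $\F_k$ is $o(2^M)$. Assuming the theorem's $o(\cdot)$ can be strengthened to a saving factor $2^{-\Omega(\binom{n-k-1}{k-1})}$, which is what Frankl--Kupavskii's proof actually gives, this saving must then beat the excess from step 1. If instead $\F_k$ is trivial with centre $x$ and $\F_k$ is large (say $\#\F_k\ge M-\binom{n-k-1}{k-1}/2$), then every lower set $G\not\ni x$ must meet almost all $k$-sets through $x$. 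This is a cross-intersecting constraint: a set $G$ of size $i\le k$ with $x\notin G$ misses $\binom{n-1-i}{k-1}$ sets of the star, which is more than the allowed deficiency. Hence all of $\F_{<k}$ lies in the star of $x$, and $\F$ is trivial unless $\F_k$ is smaller. If $\F_k$ is trivial but small, count $\F_k$ by $\binom{M}{\le M-t}\ll 2^M$ via entropy. The lower layers are then counted recursively: $\F_{<k}$ is an intersecting family in $\binom{[n]}{\le k-1}$, and $n\ge 2(k-1)+2+2\sqrt{(k-1)\log(k-1)}$ still holds, so induction on $k$ gives at most $(n+o(1))2^{\sum_{i<k}\binom{n-1}{i-1}}$ choices for it.

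3. \textbf{Combine via induction.} Setting $J(n,k)\le \#\{\text{trivial}\}+\#\{\F_k\text{ non-trivial}\}\cdot J(n,k-1)+\#\{\F_k\text{ trivial, small}\}\cdot J(n,k-1)+\dots$, each error term is of the form $o(2^M)\cdot O(n)2^{N-M}$. To make this $o(2^N)$, I need the $o(2^M)$ to be $o(2^M/n)$, and I must handle the base cases where $k$ is small relative to the recursion. For those, I would stop the recursion at a layer $k_0=k-\sqrt{k}$ and bound the lower layers crudely by $2^{\sum_{i\le k_0}\binom ni}$, which is negligible relative to $2^M$ after the ratio argument.

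\textbf{Main obstacle.} The hardest step is making the savings from the non-trivial or sparse top layer quantitatively beat a factor of $n$, as well as the cost of the lower layers not being forced into the same star. The most delicate part is the regime where $\F_k$ is a trivial star around $x$ while some lower sets avoid $x$. Here I would use the cross-intersection bound $\#\mathcal A+\#\mathcal B\le$ (Hilton--Milner-type bound for cross-intersecting families) to show that each set avoiding $x$ forces at least $\binom{n-k-1}{k-1}$ removals from the star. This exponential loss would then dominate the polynomially many choices of such sets.
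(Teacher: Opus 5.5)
There is a genuine gap in the case where $\F_k$ is trivial with centre $x$ but some lower set avoids $x$. In Steps 2--3 you count such $\F_k$ by size, as $\binom{M}{\le M-t}$ with $t\approx\binom{n-k-1}{k-1}$, and then multiply by $J(n,k-1)$. This gives no saving. Since $t=o(M)$, almost every subfamily of the star (typical size about $M/2$) already has size at most $M-t$. So $\binom{M}{\le M-t}=(1-o(1))2^M$, and your product is of order $n^2 2^N$, which is larger than the main term.

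The saving must come from counting $\F_k$ \emph{conditionally} on the lower layers. Once $x$ and $\F_1,\dots,\F_{k-1}$ are fixed, $\F_k$ is a subfamily of the fixed family $\mathcal{K}$ of $k$-sets through $x$ that meet every lower set. If some $G\in\F_{<k}$ avoids $x$, then $\mathcal{K}(x)$ and $\{G\}$ are cross-intersecting in $[n]\setminus\{x\}$. Corollary~\ref{corollary:F} then gives $\#\mathcal{K}\le M-\binom{n-1-\#G}{k-1}\le M-\binom{n-k-1}{k-1}$, so there are at most $2^{M-\binom{n-k-1}{k-1}}$ choices for $\F_k$. Your ``main obstacle'' paragraph describes exactly this mechanism, and it is Lemma~\ref{lem-2} of the paper, but it is not what Steps 2--3 compute. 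Also, the number of choices of $G$ is up to $2^n$, not polynomial; this is harmless only because $\binom{n-k-1}{k-1}\ge n^2/8$.

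The second failing step is your base-case patch. Bounding the layers below $k_0=k-\sqrt{k}$ by $2^{\sum_{i\le k_0}\binom{n}{i}}$ costs a factor of at least $2^{\binom{n-1}{k_0}}$ over $2^{\sum_{i\le k_0}\binom{n-1}{i-1}}$. For $n$ near $2k+2\sqrt{k\log k}$ one has $\binom{n-1}{k_0}=Me^{-O(\sqrt{\log k})}=2^{(2+o(1))k}$. By contrast, the saving guaranteed at each level $j\in(k_0,k]$ is only $2^{\binom{n-j-1}{j-1}}$, with $\binom{n-j-1}{j-1}=2^{O(\sqrt{k}\log^{3/2}k)}$. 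That $\binom{n-1}{k_0}$ is $o(M)$ is not the relevant comparison.

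In fact no induction on $k$ is needed, and you discarded the relevant idea in Step 1 by using the trivial bound $2^{\binom{n}{i}}$ per layer. Each $\F_i$ is itself an intersecting $i$-uniform family, so by \eqref{nieq1} it has at most about $2n\cdot 2^{\binom{n-1}{i-1}}$ choices. The total loss over all lower layers is then only $(2n)^k=2^{O(k\log n)}$. This is swamped by the top-layer saving $2^{-\binom{n-k-1}{k-1}}\le 2^{-n^2/8}$ in both cases. For non-trivial $\F_k$, the Frankl--Kupavskii count \eqref{nieq2} supplies exactly the strengthened $o(\cdot)$ you wanted to assume. Your inclusion--exclusion lower bound is fine, and more careful than the paper's one-line count of trivial families.
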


Theorem \ref{thm:main} implies that almost all intersecting families $\mathcal{F} \subset \binom{[n]}{\leq k}$ are trivial.
Here ``almost all'' means that the proportion of intersecting families that are not trivial tends to zero as $k$ tends to infinity. Theorem \ref{thm:main} shows that even when families of different sizes are allowed, the typical intersecting family is still trivial, provided $n$ is sufficiently large relative to $k$.

We remark that Theorem \ref{thm:main} cannot be derived directly from the known results on uniform intersecting families. Non-uniform families may contain sets of various sizes simultaneously, which brings in additional structural complications that do not arise in the uniform case. New arguments are therefore required beyond those used in the uniform setting.

\section{Notations and preliminaries}

%Before presenting the proof of Theorem \ref{thm:main}, let us introduce some notations and results needed.
Set 
\[
    \mathcal{F}(i) := \{F \backslash \{i\}:\, i\in F \in \mathcal{F} \}, \;
    \mathcal{F}(\overline{i}) := \{F :\, F \in \mathcal{F},\,  i\notin F \}.
\]
The estimate for the number of non-trivial uniform intersecting families will be needed.

\begin{theorem} [Frankl and Kupavskii \cite{FK18}] \label{thm:FK}
    Let $n$ and $k$ be integers with $n \ge 2k+2+ 2\sqrt{k \log k}$. 
    For $k \rightarrow +\infty$, 
    \begin{equation} \label{nieq1}
        I(n,k) = (n + o(1)) 2^{\binom{n-1}{k-1}},
    \end{equation}
    \begin{equation} \label{nieq2}
        I(n,k,\ge 1) = (1 + o(1))n \binom{n-1}{k} 2^{\binom{n-1}{k-1} - \binom{n-k-1}{k-1}}.
    \end{equation}
\end{theorem}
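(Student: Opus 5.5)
Since this is the Frankl--Kupavskii estimate, I would reprove it by separating trivial from non-trivial families and counting the non-trivial ones by where the maximum-degree element fails to cover. Throughout, write $N:=\binom{n-1}{k-1}$ and $M:=\binom{n-k-1}{k-1}$.

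\textbf{Reducing (\ref{nieq1}) to (\ref{nieq2}).} Every trivial $k$-uniform intersecting family lies in a star $\{F: i\in F\}$ for some $i$, so there are at most $n2^{N}$ of them. Two different stars share only $\binom{n-2}{k-2}=\frac{k-1}{n-1}N$ sets. Hence inclusion--exclusion shows that the trivial families number $(n-o(1))2^{N}$. It therefore suffices to prove (\ref{nieq2}) and observe that it is $o(2^N)$. This holds because $n\binom{n-1}{k}2^{-M}\to 0$, as $M$ is exponentially large in $k$ when $n\ge 2k+2$.

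\textbf{Lower bound in (\ref{nieq2}).} Fix an element $i$ and a $k$-set $B\not\ni i$; there are $n\binom{n-1}{k}$ such pairs. Take $\{B\}$ together with an arbitrary subfamily of the $N-M$ sets that contain $i$ and meet $B$. The result is intersecting. It is non-trivial unless, for some $b\in B$, every chosen $i$-set contains $b$, and this exceptional event has probability $o(1)$. The pair $(i,B)$ is recovered from a typical such family: $i$ is the unique element of maximum degree, and $B$ is the unique set missing $i$. This gives the lower bound $(1-o(1))n\binom{n-1}{k}2^{N-M}$.

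\textbf{Upper bound in (\ref{nieq2}).} Let $\mathcal F$ be non-trivial and let $i$ be an element of maximum degree. Put $\mathcal B=\mathcal F(\overline i)\neq\emptyset$ and $t=\#\mathcal B$. Every member of $\mathcal F(i)$ is a $(k-1)$-subset of $[n]\setminus\{i\}$ meeting every $B\in\mathcal B$. Hence
\[
\#\{\text{such }\mathcal F\}\le \sum_{t\ge1}\sum_{\#\mathcal B=t} 2^{\,N-D(\mathcal B)},
\]
where $D(\mathcal B)$ is the number of $(k-1)$-sets in $[n]\setminus\{i\}$ disjoint from at least one $B\in\mathcal B$. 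The term $t=1$ gives exactly the main term, with $D=M$.

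For $t\ge 2$ I would prove a growth estimate $D(\mathcal B)\ge M+(t-1)\binom{n-k-2}{k-2}$. The extremal configuration here is $t$ sets all sharing a common $(k-1)$-set, and the second set already contributes at least $\binom{n-k-2}{k-2}$ new disjoint sets. For large $t$ I would also use the crude bound $D\ge c\,t\,M/k$ obtained by averaging over $\mathcal B$. Then the contribution of each $t\ge 2$ is at most
\[
\binom{\binom{n-1}{k}}{t}2^{N-M-(t-1)\binom{n-k-2}{k-2}},
\]
and relative to the main term this is roughly $\bigl(\binom{n-1}{k}2^{-\binom{n-k-2}{k-2}}\bigr)^{t-1}$. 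Summing over $t$ gives $o(1)$ times the main term, provided $\binom{n-k-2}{k-2}\gg k\log n$.

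\textbf{Main obstacle.} The hardest step is establishing the growth estimate uniformly in $t$. In particular, one must handle families $\mathcal B$ that are themselves highly clustered, such as many sets through a common element $j$, where the new disjoint sets per added member can be small. I would treat this by splitting on the maximum degree inside $\mathcal B$ and using the choice of $i$ as a maximum-degree element to control the count.

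The threshold $n\ge 2k+2+2\sqrt{k\log k}$ should enter exactly in the comparison of $\binom{n-k-2}{k-2}$ with $\log\binom{n-1}{k}$ and with the entropy of choosing $\mathcal B$. At $n=2k+2+s$, the ratio $\binom{n-k-2}{k-2}\big/\binom{n-1}{k-1}$ behaves like $e^{-\Theta(k^2/n)}$ corrected by $s$, and $s=2\sqrt{k\log k}$ is what beats the polynomial factors.
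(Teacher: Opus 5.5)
The paper gives no proof of this statement: it is imported from Frankl--Kupavskii and used as a black box in the proofs of Lemmas~\ref{lem-1} and~\ref{lem-2}, so your proposal has to stand on its own. Your reduction of \eqref{nieq1} to \eqref{nieq2} is fine, and so is your lower bound in \eqref{nieq2}. (A minor point: at $n=2k+2$ one has $M=\binom{k+1}{2}$, which is quadratic rather than exponential in $k$, but this still dominates $\log_2\bigl(n\binom{n-1}{k}\bigr)$.) The gap is the upper bound for $t\ge 2$, and there both of your proposed estimates are false.

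Take the configuration you call extremal, $B_j=S\cup\{x_j\}$ with $\#S=k-1$. A $(k-1)$-set is disjoint from some $B_j$ iff it avoids $S$ and does not contain all the $x_j$. Hence $D(\mathcal{B})=\binom{n-k}{k-1}-\binom{n-k-t}{k-1-t}=M+\sum_{j=2}^{t}\binom{n-k-j}{k-j}$. So the increments decrease, and the growth estimate fails from $t=3$ on; for example $n=10$, $k=3$, $t=3$ gives $D=21<25=M+2\binom{5}{1}$. Moreover $D\le\binom{n-k}{k-1}$ no matter how many petals are added, and for large $t$ your right-hand side even exceeds $N\ge D$. The averaging bound is also wrong: double counting pairs $(B,C)$ gives only $D\ge tM/\binom{n-k}{k}=tk/(n-k)$. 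Indeed the full star $\{B\ni j\}$ in $[n]\setminus\{i\}$ has $t=D=\binom{n-2}{k-1}$.

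More fundamentally, no bound on $D$ in terms of $t$ alone can rescue your displayed inequality, because it sums over \emph{all} $t$-subsets $\mathcal{B}\subset\binom{[n]\setminus\{i\}}{k}$. Since $D(\mathcal{B})\le N$, its right-hand side is at least $2^{\binom{n-1}{k}}$, and $\binom{n-1}{k}=\frac{n-k}{k}N>N$. Concretely, take $t=\binom{n-3}{k-2}$ and $n=(2+o(1))k$. Kruskal--Katona gives $\min D=\binom{n-3}{k-1}=\frac{n-k-1}{k-1}t\approx t$, whereas $\log_2\binom{\binom{n-1}{k}}{t}\ge t\log_2\frac{(n-1)(n-2)}{k(k-1)}\approx 2t$.

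Restricting to intersecting $\mathcal{B}$ does not suffice either. The $\mathcal{B}$ inside the star of a single $j\ne i$ already contribute about $2^{\binom{n-2}{k-1}}\cdot 2^{N-\binom{n-2}{k-1}}=2^N$. So the missing ingredient is a count of families with large $\mathcal{F}(\bar i)$ (large diversity) that genuinely uses two facts: $\mathcal{F}(\bar i)$ is intersecting, and $i$ has maximum degree. This is the real content of the Frankl--Kupavskii argument, which brings in bounds on the size of an intersecting family in terms of its diversity. It is also where the hypothesis on $n$ is actually used. Your proposal only names it as the ``main obstacle''.

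A telltale sign is the comparison through which you say the threshold enters, $\binom{n-k-2}{k-2}\gg\log\binom{n-1}{k}$: it already holds at $n=2k+2$. The small-$t$ terms are the easy part, and the $2\sqrt{k\log k}$ condition lives entirely in the large-$t$ regime you have not handled.
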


Two families $\mathcal{F} \subset \binom{[n]}{k}$ and $\mathcal{G} \subset \binom{[n]}{\ell}$ are called \textit{cross-intersecting} if $ A\cap B\neq \emptyset$
for every $A \in \mathcal{F}$ and $B \in \mathcal{G}$.

\begin{theorem}[Frankl and Tokushige \cite{FT92}]\label{thm-FT-92} 
    Let $n,k,\ell$ be integers with $n \ge k+\ell$ and $k \ge \ell$. 
    If $\mathcal{F} \subset \binom{[n]}{k}$ and $\mathcal{G} \subset \binom{[n]}{\ell}$ are cross-intersecting, then
    \begin{equation} \label{cieq}
        \#\mathcal{F} + \#\mathcal{G} \le \binom{n}{k} - \binom{n-\ell}{k} + 1.
    \end{equation}
\end{theorem}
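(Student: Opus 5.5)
We may assume both $\mathcal{F}$ and $\mathcal{G}$ are nonempty. If $\mathcal{G}=\emptyset$, then $\mathcal{F}=\binom{[n]}{k}$ is allowed and the bound fails, so the statement is meant for nonempty pairs. The plan is to reduce to a one-parameter extremal problem via complementation and the Kruskal--Katona theorem, and then compare the two extreme configurations.

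\textbf{Reduction to a shadow bound.} Let $\mathcal{F}^c=\{[n]\setminus A:\,A\in\mathcal{F}\}\subset\binom{[n]}{n-k}$. Since $n-k\ge \ell$, the condition that $\mathcal{F}$ and $\mathcal{G}$ are cross-intersecting says exactly that no $B\in\mathcal{G}$ lies inside some member of $\mathcal{F}^c$. Equivalently, $\mathcal{G}\cap\partial_\ell(\mathcal{F}^c)=\emptyset$, where $\partial_\ell$ denotes the $\ell$-shadow. Hence
\[
\#\mathcal{F}+\#\mathcal{G}\le \#\mathcal{F}+\binom{n}{\ell}-\#\partial_\ell(\mathcal{F}^c),
\]
and this is attained by taking $\mathcal{G}$ to be the complement of the shadow. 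By Kruskal--Katona, for a fixed value $m=\#\mathcal{F}$ the shadow is minimized when $\mathcal{F}^c$ is the initial segment of length $m$ in colex order on $\binom{[n]}{n-k}$. The problem therefore becomes the maximization of the function
\[
\phi(m)=m+\binom{n}{\ell}-\#\partial_\ell(\mathcal{C}_m)
\]
over $1\le m$ with $\#\partial_\ell(\mathcal{C}_m)<\binom{n}{\ell}$, where $\mathcal{C}_m$ is the colex initial segment. The constraint on the shadow is exactly the requirement $\mathcal{G}\neq\emptyset$.

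\textbf{Comparing the extremes.} There are two natural endpoints.
\begin{itemize}
\item The largest admissible $m$ leaves exactly one $\ell$-set outside the shadow, say $\mathcal{G}=\{[\ell]\}$ after relabelling. Then $\mathcal{F}$ consists of all $k$-sets meeting $[\ell]$, which gives $\binom{n}{k}-\binom{n-\ell}{k}+1$, the claimed bound.
\item The case $m=1$ gives $1+\binom{n}{\ell}-\binom{n-k}{\ell}$.
\end{itemize}
One then checks that this second value is at most the first when $k\ge \ell$ and $n\ge k+\ell$. Both quantities count sets meeting a fixed set, and the comparison can be made by an injection, or by induction on $n$ using Pascal's rule.

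\textbf{The main obstacle: intermediate $m$.} The hardest step is ruling out intermediate values of $m$, that is, showing that $\phi$ is maximized at an endpoint. I would write $m$ in its cascade form $m=\binom{a_{n-k}}{n-k}+\binom{a_{n-k-1}}{n-k-1}+\cdots$ and read off the shadow size. The aim is a ``convexity'' statement: when one passes from one full block of the colex order to the next, the added sets (the increase in $m$) are dominated by the added shadow, except at the final step. If the direct computation becomes messy, the fallback is the shifting approach:
\begin{enumerate}
\item Apply simultaneous left-shifts $S_{ij}$ to both families. These preserve the sizes and the cross-intersecting property, so $\mathcal{F}$ and $\mathcal{G}$ may be assumed shifted.
\item Induct on $n$ by splitting each family according to whether it contains $n$. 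The pairs $\mathcal{F}(\overline{n}),\mathcal{G}(n)$ and $\mathcal{F}(n),\mathcal{G}(\overline{n})$ are again cross-intersecting on $[n-1]$, with shiftedness handling the degenerate cases.
\item Apply the induction hypothesis to these pairs, and use Pascal's identity $\binom{n}{k}-\binom{n-\ell}{k}=\bigl(\binom{n-1}{k}-\binom{n-1-\ell}{k}\bigr)+\bigl(\binom{n-1}{k-1}-\binom{n-1-\ell}{k-1}\bigr)$ to close the induction.
\end{enumerate}
The boundary cases $n=k+\ell$ and $k=\ell$ are checked directly.
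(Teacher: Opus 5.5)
The paper does not prove this statement; it quotes it from Frankl--Tokushige, so your argument has to stand on its own. Your opening remark is correct. As printed, the inequality fails for $\mathcal{G}=\emptyset$ when $n>k+\ell$, and the theorem needs both families non-empty, which is the form Corollary~\ref{corollary:F} actually uses. The complementation/Kruskal--Katona reduction to $\phi(m)$ and the two endpoint values are also right.

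\textbf{The gap.} The step that carries the entire content, bounding $\phi(m)$ for intermediate $m$, is only announced, not proved. It is not a routine convexity check. Already for $k=\ell=2$, $n=5$ one gets $\phi(1),\dots,\phi(7)=8,7,7,8,7,7,8$. So $\phi$ dips and recovers inside each colex block, and a genuine cascade analysis is needed.

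\textbf{The shifting fallback does not fill this gap as you set it up.}
\begin{itemize}
\item The pair $(\mathcal{F}(n),\mathcal{G}(\overline{n}))$ has uniformities $(k-1,\ell)$, so for $k=\ell$ the hypothesis $k\ge\ell$ is lost.
\item Applying the induction hypothesis with roles swapped bounds each of your two pairs by $\binom{n-1}{k}-\binom{n-k}{k}+1$. The sum exceeds the target $\binom{n}{k}-\binom{n-k}{k}+1$ by $\binom{n-1}{k}-\binom{n-1}{k-1}-\binom{n-k}{k}+1$, which equals $2$ for $k=3$, $n=7$.
\item $k=\ell$ is not a boundary case to be ``checked directly''. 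It is the full Hilton--Milner cross-intersecting inequality for all $n\ge 2k$, and your case $k=\ell+1$ reduces to it via the pair of type $(\ell,\ell)$.
\item Shiftedness only guarantees $[k]\in\mathcal{F}(\overline{n})$ and $[\ell]\in\mathcal{G}(\overline{n})$. Either $\mathcal{F}(n)$ or $\mathcal{G}(n)$ may be empty, and the induction hypothesis cannot be applied to such a pair, since that is exactly where the statement is false.
\end{itemize}

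\textbf{A decomposition that works.} Take $n\ge k+\ell+1$ and $\ell\ge 2$. Pair $(\mathcal{F}(\overline{n}),\mathcal{G}(\overline{n}))$, of type $(k,\ell)$, with $(\mathcal{F}(n),\mathcal{G}(n))$, of type $(k-1,\ell-1)$; both keep the larger uniformity first. The latter pair is cross-intersecting because $k+\ell-2<n-1$ leaves a free $x<n$ to shift $n$ onto. Then split into cases.
\begin{itemize}
\item If $\mathcal{F}(n)$ and $\mathcal{G}(n)$ are both non-empty, the two induction bounds sum to the target minus $\binom{n-1-\ell}{k-2}-1\ge 0$.
\item If $\mathcal{G}(n)=\emptyset$, every member of $\mathcal{F}(n)$ meets $[\ell]$. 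Hence $\#\mathcal{F}(n)\le\binom{n-1}{k-1}-\binom{n-1-\ell}{k-1}$, and Pascal's rule gives exactly the target.
\item If $\mathcal{F}(n)=\emptyset\ne\mathcal{G}(n)$ and $k=\ell$, use symmetry with the previous case.
\item If $\mathcal{F}(n)=\emptyset\ne\mathcal{G}(n)$ and $k>\ell$, use the induction bound for $(\mathcal{F}(\overline{n}),\mathcal{G}(n))$, of type $(k,\ell-1)$, together with $\#\mathcal{G}(\overline{n})\le\binom{n-1}{\ell}\le\binom{n-1}{k-1}$.
\end{itemize}
The base cases are as follows.
\begin{itemize}
\item For $n=k+\ell$, complementation is a bijection $\binom{[n]}{k}\to\binom{[n]}{\ell}$, and $\mathcal{G}$ avoids the complements of members of $\mathcal{F}$.
\item $\ell=1$ is a direct count.
\end{itemize}
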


The following result is clear from Theorem \ref{thm-FT-92}.

\begin{corollary}\label{corollary:F}
    Let $n,k,\ell$ be integers with $n \ge k+\ell$ and $k \ge \ell$. 
    If $\mathcal{F} \subset \binom{[n]}{k}$ and $\mathcal{G} \subset \binom{[n]}{\ell}$ are cross-intersecting and non-empty, then
    \[
        \#\mathcal{F} \le \binom{n}{k} - \binom{n-\ell}{k} .
    \]
\end{corollary}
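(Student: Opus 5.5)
The corollary follows from Theorem \ref{thm-FT-92} by using the non-emptiness of $\mathcal{G}$. Since $\mathcal{G}$ is non-empty, $\#\mathcal{G}\ge 1$. The hypotheses $n\ge k+\ell$, $k\ge \ell$ are exactly those of Theorem \ref{thm-FT-92}, so inequality \eqref{cieq} applies to the cross-intersecting pair $(\mathcal{F},\mathcal{G})$. Combining the two gives
\[
\#\mathcal{F}\le \binom{n}{k}-\binom{n-\ell}{k}+1-\#\mathcal{G}\le \binom{n}{k}-\binom{n-\ell}{k},
\]
which is the claim.

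One might instead want a direct argument not relying on the full strength of \eqref{cieq}. Fix any $B\in\mathcal{G}$, which exists because $\mathcal{G}$ is non-empty. Every $A\in\mathcal{F}$ must meet $B$, so $\mathcal{F}$ avoids the $\binom{n-\ell}{k}$ $k$-sets contained in $[n]\setminus B$. Hence $\#\mathcal{F}\le\binom{n}{k}-\binom{n-\ell}{k}$ directly. This elementary argument needs only $\#B=\ell$ and even dispenses with the conditions $n\ge k+\ell$ and $k\ge\ell$. When $n<k+\ell$ the bound reads $\binom{n}{k}$, which is trivial.

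There is no real obstacle here. The only point requiring care is that the ``$+1$'' in \eqref{cieq} is absorbed by $\#\mathcal{G}\ge1$, which is exactly where non-emptiness is used. I would present the one-line derivation from Theorem \ref{thm-FT-92}, as the paper suggests, and perhaps mention the direct counting argument as a remark.
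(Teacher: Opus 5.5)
Your proposal is correct and matches the paper's derivation: apply Theorem~\ref{thm-FT-92} and use $\#\mathcal{G}\ge 1$ to absorb the $+1$. Your alternative remark is also valid and more elementary. Fixing one $B\in\mathcal{G}$ and excluding the $\binom{n-\ell}{k}$ $k$-subsets of $[n]\setminus B$ gives the bound with no conditions on $n,k,\ell$, and a single set $G$ is all that the application in Lemma~\ref{lem-2} actually uses.
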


\begin{remark}
The restriction 
$
n \ge 2k+2+2\sqrt{k\log k} 
$
in Theorem \ref{thm:main} is inherited from the current best known bounds in the uniform setting in Theorem \ref{thm:FK}. Any improvement of the corresponding range in Theorem \ref{thm:FK} would yield an analogous improvement for the non-uniform case considered here.
\end{remark}

\section{Proof of Theorem \ref{thm:main}}
%This section is devoted to proving Theorem \ref{thm:main}. 
The number of trivial intersecting families is $n2^{\sum_{i=1}^k \binom{n-1}{i-1}}$. 
It is sufficient to show that the number of non-trivial intersecting families is $o\left(2^{\sum_{i=1}^k \binom{n-1}{i-1}}\right)$.
Let $\mathcal{H}$ denote the collection of all non-trivial intersecting families $\mathcal{F} \subset \binom{[n]}{\le k}$. 
Set $\mathcal{F}_k:=\{F\in \mathcal{F}\colon \# F=k\}.$
Partition $\mathcal{H}$ into two subfamilies,
\[
    \mathcal{A} := \{ \mathcal{F} \in \mathcal{H}: \cap_{F \in \mathcal{F}_k}F = \emptyset \}, \;
    \mathcal{B} := \{ \mathcal{F} \in \mathcal{H}:\cap_{F \in \mathcal{F}_k}F \ne \emptyset\}.
\]
We show that both $\#\mathcal{A}$ and $\#\mathcal{B}$ are $o( 2^{\sum_{i=1}^k \binom{n-1}{i-1}})$.

\begin{lemma}\label{lem-1}
$\#\mathcal{A}=o( 2^{\sum_{i=1}^k \binom{n-1}{i-1}})$.
\end{lemma}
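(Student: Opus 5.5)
Write $\mathcal{F}=\mathcal{F}_k\cup\mathcal{F}_{<k}$ with $\mathcal{F}_{<k}:=\mathcal{F}\setminus\mathcal{F}_k\subset\binom{[n]}{\le k-1}$. For $\mathcal{F}\in\mathcal{A}$, the layer $\mathcal{F}_k$ is a non-trivial $k$-uniform intersecting family. In particular it is non-empty, since an empty family has no sets and we regard its intersection as $[n]$. By \eqref{nieq2} of Theorem~\ref{thm:FK}, the number of choices for $\mathcal{F}_k$ is at most
\[
(1+o(1))\,n\binom{n-1}{k}\,2^{\binom{n-1}{k-1}-\binom{n-k-1}{k-1}} .
\]
Given $\mathcal{F}_k$, I bound the number of admissible $\mathcal{F}_{<k}$ crudely but uniformly. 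Each layer $\mathcal{F}_i$ ($1\le i\le k-1$) is intersecting and cross-intersects the non-empty family $\mathcal{F}_k$. Fix any $F_0\in\mathcal{F}_k$; then every member of $\mathcal{F}_i$ meets $F_0$.

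The key step is to show that the lower layers together contribute at most $2^{\sum_{i=1}^{k-1}\binom{n-1}{i-1}}$ choices times a factor that is $2^{o(\binom{n-k-1}{k-1})}$. For a uniform bound I will apply Theorem~\ref{thm:FK} to each intersecting layer separately: $\mathcal{F}_i\subset\binom{[n]}{i}$ is intersecting with $n\ge 2i+2+2\sqrt{i\log i}$. This gives at most $(n+o(1))2^{\binom{n-1}{i-1}}$ choices for large $i$, and for small $i$ at most $I(n,i)\le 2^{(1+o(1))\binom{n-1}{i-1}}$ by the Balogh et al.\ bound. Since $\binom{n-1}{i-1}$ decays geometrically in $i$ going downward (ratio roughly $i/(n-i)\le 1/2$), the $o(1)$ exponent errors from the small layers sum to $o(\binom{n-1}{k-2})$.

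Multiplying, the number of $\mathcal{F}\in\mathcal{A}$ is at most
\[
2^{\sum_{i=1}^{k}\binom{n-1}{i-1}}\cdot \mathrm{poly}(n)^{k}\, n\binom{n-1}{k}\,2^{-\binom{n-k-1}{k-1}+o(\binom{n-1}{k-2})}.
\]
It remains to check that $\binom{n-k-1}{k-1}$ dominates both $k\log n+\log\binom{n-1}{k}$ and the error $o(\binom{n-1}{k-2})$. The first is easy: $\binom{n-k-1}{k-1}\ge\binom{k+1}{k-1}$, which is already quadratic in $k$ and is exponential in the relevant ranges.

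The main obstacle is the second comparison. The quantity $\binom{n-1}{k-2}$ can exceed $\binom{n-k-1}{k-1}$ when $n$ is close to $2k$, so the crude per-layer bound may be too lossy. To fix this I would first try exploiting the cross-intersection with $\mathcal{F}_k$ more effectively. Because $\mathcal{F}_k$ is non-trivial, it contains sets $A,B$ with $1\in A$, $1\notin B$, and so on. Each $(k-1)$-layer must then meet all of them, and Corollary~\ref{corollary:F} bounds $\#\mathcal{F}_i$ for families cross-intersecting $\mathcal{F}_k$. Alternatively, I would group $\mathcal{F}_{k-1}$ with $\mathcal{F}_k$ by choosing a common trivial/non-trivial structure. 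Concretely, the plan is to replace the use of the Balogh et al.\ bound on the top lower layer $i=k-1$ by an exact $(n+o(1))2^{\binom{n-1}{k-2}}$ from \eqref{nieq1}, which is valid since $k-1\to\infty$. The weaker bound then only applies to layers $i\le k-2$, whose total error $o(\binom{n-1}{k-3})$ is smaller by a factor of about $k/n$. This still has to be verified against $\binom{n-k-1}{k-1}$; if it fails, I would split small $i$ further and use \eqref{nieq1} for all $i\ge\sqrt{k}$, say, leaving only negligible tiny layers.
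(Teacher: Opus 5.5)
Your outline matches the paper's: bound the choices of $\mathcal{F}_k$ by \eqref{nieq2}, bound each lower layer $\mathcal{F}_i$ separately as an intersecting family, multiply, and let the factor $2^{-\binom{n-k-1}{k-1}}$ absorb the rest. The gap is that you never establish a valid bound for the lower layers, and the first two versions you write down fail. Using a bound of the form $2^{(1+o(1))\binom{n-1}{i-1}}$ on a layer $i$ close to $k$ leaves an exponent error $o(1)\cdot\binom{n-1}{i-1}$. Near the bottom of the range, $n\approx 2k+2\sqrt{k\log k}$, this is hopeless. There $\binom{n-1}{k-3}\ge\binom{2k}{k-3}$ is of order $4^k/\sqrt{k}$, while $\binom{n-k-1}{k-1}=\binom{n-k-1}{n-2k}=2^{O(\sqrt{k}\,\log^{3/2}k)}$. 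Nothing in the Balogh et al.\ bound makes the $o(1)$ exponentially small. So moving only the layer $i=k-1$ to \eqref{nieq1} does not help, and the step you leave ``still to be verified'' is the missing part of the proof.

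The third option you mention works and should be the main argument. For $\sqrt{k}\le i\le k-1$ we have $n\ge 2i+2+2\sqrt{i\log i}$ and $i\to\infty$, so \eqref{nieq1} applies. Each such layer then costs only a factor $n+o(1)\le 2n$ over $2^{\binom{n-1}{i-1}}$, with no error in the exponent, for $2^{O(k\log n)}$ in total. For $i<\sqrt{k}$ even the trivial bound $2^{\binom{n}{i}}$ suffices, since $\sum_{i<\sqrt{k}}\binom{n}{i}=o\bigl(\binom{n-k-1}{k-1}\bigr)$ throughout the range. At the bottom of the range the logarithms are roughly $\tfrac12\sqrt{k}\log k$ versus $\sqrt{k\log k}\,\log k$, and the ratio $\binom{n-k-1}{k-1}/\binom{n}{\lfloor\sqrt{k}\rfloor}$ increases with $n$. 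The paper instead applies \eqref{nieq1} to every $1\le i\le k-1$ and replaces $n+o(1)$ by $2n$. For bounded $i$ this is not literally covered by Theorem~\ref{thm:FK}, which is stated for $k\to\infty$, so your instinct to treat the small layers separately is sound, and the cutoff makes that step rigorous.

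Your ``first comparison'' is also not justified as written. $\binom{k+1}{k-1}=\binom{k+1}{2}$ is smaller than $k\log n+\log\binom{n-1}{k}$ once, say, $n\ge 2^k$. You need a lower bound that grows with $n$, such as $\binom{n-k-1}{k-1}\ge\binom{n-k-1}{2}=\Omega(n^2)$ for $n\ge 2k+2$, $k\ge 3$, as in \eqref{ine-n28}. This beats $O(k\log n)$ because $n\ge 2k$.
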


\begin{proof}
By $\mathcal{F}_k\subset \binom{[n]}{k}$ and Theorem \ref{thm:FK} (\ref{nieq2}), the number of choices for $\mathcal{F}_k$ is at most 
\[
(1 + o(1))n \binom{n-1}{k} 2^{\binom{n-1}{k-1} - \binom{n-k-1}{k-1}}.
\] 
For each $1\le i\le k-1$, the number of choices for $\mathcal{F}_i$ is at most $(n + o(1)) 2^{\binom{n-1}{i-1}}$ by Theorem \ref{thm:FK} (\ref{nieq1}). Since $\mathcal{F}=\bigcup_{i=1}^k \mathcal{F}_k$, we deduce that 
\begin{align*}
    \frac{\#\mathcal{A}}{\prod_{i=1}^{k}2^{\binom{n-1}{i-1}}} &\le \frac{(1 + o(1))n \binom{n-1}{k} 2^{\binom{n-1}{k-1} - \binom{n-k-1}{k-1}} \prod_{i=1}^{k-1} (n + o(1)) 2^{\binom{n-1}{i-1}} }{\prod_{i=1}^{k}2^{\binom{n-1}{i-1}}} \\
    & < \frac{2n \cdot \binom{n-1}{k} 2^{\binom{n-1}{k-1} - \binom{n-k-1}{k-1}} \prod_{i=1}^{k-1} 2n \cdot 2^{\binom{n-1}{i-1}} }{\prod_{i=1}^{k}2^{\binom{n-1}{i-1}}} \\
    & = 2^k n^k \binom{n-1}{k} 2^{-\binom{n-k-1}{k-1}}.
\end{align*}
Using $ \binom{n-1}{k} \le n^k = 2^{k \log n}$, we have
\begin{equation}
    \frac{\#\mathcal{A}}{\prod_{i=1}^{k}2^{\binom{n-1}{i-1}}}  < 2^{k+ 2k \log n - \binom{n-k-1}{k-1}}.
\end{equation}
Note that when $n\ge 2k+4$ and $k\ge 3$, 
\begin{equation}\label{ine-n28}
\binom{n-k-1}{k-1}\ge \binom{\frac{n}{2}+1}{k-1}\ge \binom{\frac{n}{2}+1}{2}\ge \frac{n^2}{8}.\end{equation}
Thus
$$
    \frac{\#\mathcal{A}}{\prod_{i=1}^{k}2^{\binom{n-1}{i-1}}}  \le 2^{k + 2k\log n  - \frac{n^2}{8}}.
$$
Since $n \ge 2k + 2 + 2 \sqrt{k\log k}$, then $k + 2k\log n - \frac{n^2}{8} \rightarrow -\infty$ when $k \rightarrow +\infty$, implying
$
    \#\mathcal{A} \le o( 2^{\sum_{i=1}^k \binom{n-1}{i-1}}).
$
\end{proof}

%We next give an estimation on $\#\mathcal{B}$.

\begin{lemma}\label{lem-2}
$\#\mathcal{B}=o( 2^{\sum_{i=1}^k \binom{n-1}{i-1}}).$
\end{lemma}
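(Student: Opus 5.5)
The plan is to condition on a common element of the top layer and on one witness of non-triviality, and then show that the top layer $\mathcal{F}_k$ alone loses a factor $2^{-\binom{n-k}{k-1}}$. This loss will swamp all the polynomial factors, exactly as in the proof of Lemma \ref{lem-1}.

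First I treat the degenerate case $\mathcal{F}_k=\emptyset$, where the empty intersection is read as nonempty. Then $\mathcal{F}\subset\binom{[n]}{\le k-1}$. I bound the number of choices of each layer $\mathcal{F}_i$, $1\le i\le k-1$, by $(n+o(1))2^{\binom{n-1}{i-1}}<2n\cdot 2^{\binom{n-1}{i-1}}$, as in Lemma \ref{lem-1}. The ratio to $2^{\sum_{i=1}^k\binom{n-1}{i-1}}$ is then at most $(2n)^{k}2^{-\binom{n-1}{k-1}}$. This tends to $0$ because $\binom{n-1}{k-1}\ge n^2/8$ by \eqref{ine-n28}, while $k\log(2n)=o(n^2)$.

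Now let $\mathcal{F}_k\neq\emptyset$. Choose an element $x\in\bigcap_{F\in\mathcal{F}_k}F$; there are at most $n$ choices. Since $\mathcal{F}$ is non-trivial, some $G\in\mathcal{F}$ satisfies $x\notin G$. This $G$ has size $\ell\le k$, and in fact $\ell\le k-1$, since every $k$-set of $\mathcal{F}$ contains $x$. There are at most $\sum_{\ell<k}\binom{n}{\ell}\le n^k$ choices of $G$. Because $\mathcal{F}$ is intersecting, every member of $\mathcal{F}_k$ contains $x$ and meets $G$. So $\mathcal{F}_k$ avoids all $k$-sets containing $x$ and disjoint from $G$, and there are $\binom{n-1-\ell}{k-1}\ge\binom{n-k}{k-1}$ of those. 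Hence, given $x$ and $G$, the number of choices for $\mathcal{F}_k$ is at most $2^{\binom{n-1}{k-1}-\binom{n-k}{k-1}}$. Equivalently, this is the cross-intersecting restriction behind Corollary \ref{corollary:F}, applied to the link $\mathcal{F}_k(x)$ and $\{G\}$. For the layers $i\le k-1$ I again use the bound $2n\cdot 2^{\binom{n-1}{i-1}}$, ignoring all further interactions.

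Multiplying everything gives
\[
\frac{\#\mathcal{B}}{2^{\sum_{i=1}^k\binom{n-1}{i-1}}}\le n\cdot n^k\cdot (2n)^{k-1}\,2^{-\binom{n-k}{k-1}}\le 2^{k+3k\log n-\binom{n-k}{k-1}}.
\]
Since $n-k\ge \frac n2+1$, the argument of \eqref{ine-n28} gives $\binom{n-k}{k-1}\ge n^2/8$. The exponent therefore tends to $-\infty$ as $k\to\infty$, because $n\ge 2k+2+2\sqrt{k\log k}$ gives $k\log n=o(n^2)$. Together with the degenerate case, this proves the lemma.

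The main obstacle is the per-layer bound $2n\cdot2^{\binom{n-1}{i-1}}$ for small $i$. Theorem \ref{thm:FK} is only asymptotic in the uniformity, so for bounded $i$ it does not literally apply. I would handle this exactly as Lemma \ref{lem-1} does. If more care is needed, I would instead bound the number of intersecting $i$-uniform families crudely by $2^{\binom{n-1}{i-1}+O(n^{i-2}\cdot\mathrm{poly})}$ for fixed small $i$, or absorb these bounded layers into the $2^{-n^2/8}$ savings. Note that the gain $\binom{n-k}{k-1}$ comes only from the top layer, so such crude bounds on the low layers are affordable provided their total excess is $o(n^2)$.
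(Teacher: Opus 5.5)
Your proof is correct and follows the paper's argument. You fix a common element of $\mathcal{F}_k$ and a witness $G\in\mathcal{F}$ of size at most $k-1$ that avoids it. You then observe that $\mathcal{F}_k$ must miss the $\binom{n-1-\#G}{k-1}\ge\binom{n-k}{k-1}$ star sets disjoint from $G$; this direct count is just Corollary~\ref{corollary:F} with $\mathcal{G}=\{G\}$. Finally, you absorb the polynomial factors from the lower layers, bounded via \eqref{nieq1}, into $2^{-n^2/8}$.

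Your separate case $\mathcal{F}_k=\emptyset$ is harmless but already covered by the general argument. The paper also passes over the small-$i$ uniformity issue you flag by citing \eqref{nieq1} directly.
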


\begin{proof}
For any $\mathcal{F}\in\ \mathcal{B}$, we will give the upper bound of the possible choices of both $\mathcal{F}_k$ and $\mathcal{F}\setminus\mathcal{F}_k$.   For any $ \mathcal{F} \in \mathcal{B}$, by definition, the $k$-uniform intersecting family $\mathcal{F}_k$ is trivial. Thus we may assume $a\in\cap_{F\in\mathcal{F}_k} F$.

Since $\mathcal{F}\in\mathcal{B}\subset \mathcal{H}$ and using the definition of $\mathcal{H}$, then we see that $\cap_{F\in\mathcal{F}} F=\emptyset$, which means there exist $G \in \mathcal{F} \backslash \mathcal{F}_k$ with $ a \notin G$. Since $\mathcal{F}$ is intersecting, we get $G\cap F\ne\emptyset$ for any $F\in\mathcal{F}_k$. Note that $a\in F$ and $a\not\in G$, we get that $F\cap G\ne\emptyset$. Thus $\mathcal{F}_k(a)$ and $G$ are cross-intersecting. By $\# F'=k-1\ge \# G$ for all $F' \in \mathcal{F}_k(a)$ and Corollary \ref{corollary:F}, 
\begin{equation}\label{eq-up-B-Fk}
   \#\mathcal{F}_k = \#\mathcal{F}_k(a) \le \binom{n-1}{k-1} - \binom{n-i-1}{k-1} \le \binom{n-1}{k-1} - \binom{n-k-1}{k-1},
\end{equation}
where $\# G=i\le k-1$.

Moreover, for fixed $a$ and fixed $\mathcal{F}_1,\mathcal{F}_2, \ldots, \mathcal{F}_{k-1}$, let 
$$
\mathcal{K}
:=\left\{F\in \binom{[n]}{k}\colon a\in F,\ F\cap H\ne\emptyset \text{ for any }H\in\mathcal{F}_1\cup\mathcal{F}_2\cup \cdots \cup  \mathcal{F}_{k-1}\right\}.
$$
Clearly, for any $1\le i\le k-1$, $\mathcal{K}$ and $\mathcal{F}_i$ are crossing intersecting. Notice that $a\in F$ for any $F\in \mathcal{K}$, which implies that $\mathcal{K}$ itself is an intersecting family. Thus 
$$\mathcal{I}:=\mathcal{K}\cup\mathcal{F}_1\cup\mathcal{F}_2\cup \dots \cup \mathcal{F}_{k-1}$$
 is an intersecting family. From \eqref{eq-up-B-Fk}, we deduce that $\#\mathcal{K}\le \binom{n-1}{k-1} - \binom{n-k-1}{k-1}$. Since $\mathcal{F}_k$ is a subset of $\mathcal{K}$ and $G \in \cup_{i=1}^{k-1} \mathcal{F}_i$, we see that for fixed $a$ and fixed $\mathcal{F}_1,\mathcal{F}_2, \ldots, \mathcal{F}_{k-1}$, there are at most $2^{\binom{n-1}{k-1} - \binom{n-k-1}{k-1}}$ ways to choose $\mathcal{F}_k$. By (\ref{nieq1}), the number of intersecting families $\mathcal{F}_i$, $1\le i\le k-1$, is at most $(n + o(1)) 2^{\binom{n-1}{i-1}}$. Together with the $n$ choices of $a$ and $\sum_{i=1}^{k-1} \binom{n-1}{i}$ choices of $G$,  we get
\begin{align*}
    \#\mathcal{B}
    &\le n\cdot 2^{\binom{n-1}{k-1} - \binom{n-k-1}{k-1}}\cdot \sum_{i=1}^{k-1} \binom{n-1}{i} \cdot \prod_{i=1}^{k-1}(n + o(1)) 2^{\binom{n-1}{i-1}}\\
    &<  n 2^{- \binom{n-k-1}{k-1}} 2^n (2n)^{k-1} \cdot \prod_{i=1}^{k} 2^{\binom{n-1}{i-1}}   \\
    &=  2^{n + k-1}n^k 2^{-\binom{n-k-1}{k-1}} \cdot \prod_{i=1}^{k} 2^{\binom{n-1}{i-1}} \\
    &= 2^{n+ k -1+ k \log n - \binom{n-k-1}{k-1}}\cdot \prod_{i=1}^{k} 2^{\binom{n-1}{i-1}} .
\end{align*}
  From \eqref{ine-n28}, when $n>2k+4$ and $k \rightarrow + \infty$, 
  \[n+ k-1 + k \log n - \binom{n-k-1}{k-1} \le n+ k+k\log n-\frac{n^2}{8}\rightarrow - \infty.\]
  Thus
$
    \#\mathcal{B}= o(2^{\sum_{i=1}^k \binom{n-1}{i-1}}).
$
\end{proof}

Since $\#\mathcal{H}=\#\mathcal{A}+\#\mathcal{B} = o(2^{\sum_{i=1}^k \binom{n-1}{i-1}})$, the proof is complete.

%%%%%%%%%%%%%%
\section*{Acknowledgement}
%%%%%%%%%%%%%%

The research of Yandong Bai and Haoyun Gu was supported in part by the National Key Research and Development Program of China (Grant No. 2026YFE0151700), the National Natural Science Foundation of China (Grant Nos. 12542043, 12242111, 12131013) and Guangdong Basic and Applied Basic Research Foundation (Grant No. 2023A1515030208). 
The research of Wenston J.T. Zang was
supported by the National Natural Science Foundation of China (Grant No. 12371336).

\end{document}